\newcommand{\R}{\mathbb{R}} 
\newcommand{\cx}{\check x}
\newcommand{\hx}{\hat x}
\newcommand{\rx}{\accentset{\circ}x}
\renewcommand{\epsilon}{\varepsilon}
\newcommand{\abs}{{\mbox{abs}}}
\newcommand{\norm}[1]{\left\lVert #1 \right\rVert}
\newcommand{\betrag}[1]{\left\lvert #1 \right\rvert}
\newcommand{\incf}[1]{\Delta F(#1)}
\newcommand{\integ}[1]{\int^\frac12_{-\frac12} #1 \mathrm dt}
\newtheorem{proposition}{Proposition}[section]
\newtheorem{theorem}[proposition]{Theorem}
\newtheorem{lemma}[proposition]{Lemma}
\newtheorem{defi}{Definition}[section]
\newtheoremstyle{custom}{5pt}{5pt}{}{}{\bfseries}{:}{5pt}{}
\theoremstyle{custom}
\begin{document}

%\title*{A piecewise linear secant approximation of piecewise differentiable functions via  Algorithmic Differentiation }

\title{Integrating Lipschitzian Dynamical Systems
using Piecewise Algorithmic Differentiation\footnote{Our notion of linearity includes nonhomogeneous functions, where  
the adjective {\em affine} or perhaps {\em polyhedral} would be more precise. However, such mathematical terminology might be less appealing to 
computational practitioners and to the best of our knowledge  there are no good nouns corresponding to {\em linearity} and {\em linearization} for the adjectives affine and polyhedral.}}

\author[1]{Andreas Griewank}
\author[2]{Richard Hasenfelder}
\author[2]{Manuel Radons}
\author[3,2]{Tom Streubel}
\affil[1]{School of Information Sciences Yachaytech, Ecuador}
\affil[2]{Humboldt University of Berlin, Germany}
\affil[3]{Zuse Institute Berlin, Germany}

\date{\scriptsize{\textit{griewank@yachaytech.edu.ec,
hasenfel@math.hu-berlin.de, radons@math.hu-berlin.de, streubel@zib.de}}}

% \author{ Andreas Griewank, Richard Hasenfelder, Manuel Radons, Tom Streubel \\
% School of Information Sciences Yachaytech, Ecuador and\\ 
% Department of Mathematics, Humboldt-Universit\"at zu Berlin,\\
%  Unter den Linden 6, 10099 Berlin, Germany\\
%  {\bf  url} {griewank@yachaytech.edu.ec}}

%\author{
%\name{Andreas Griewank\textsuperscript{a}, Tom
%Streubel\textsuperscript{b,c}$^{\ast}$\thanks{$^\ast$Corresponding author.
%Email: streubel@zib.de}, Richard Hasenfelder\textsuperscript{c} and Manuel
%Radons\textsuperscript{c}}
%\affil{\textsuperscript{a}School of Information Sciences Yachaytech, Ecuador;
%\textsuperscript{b}Zuse Institute Berlin, Germany;
%\textsuperscript{c}Humboldt University of Berlin, Germany}
%\received{preprint dec 2016}
%}

\maketitle

\noindent 
\begin{abstract}
In this article we analyze a generalized trapezoidal rule for initial value problems with piecewise smooth right hand side \(F:\R^n\to\R^n\). When applied to such a problem the classical trapezoidal rule suffers from a loss of accuracy if the solution trajectory intersects a nondifferentiability of \(F\). The advantage of the proposed generalized trapezoidal rule is threefold: Firstly we can achieve a higher convergence order than with the classical method. Moreover, the method is energy preserving for piecewise linear Hamiltonian systems. Finally, in analogy to the classical case we derive a third order interpolation polynomial for the numerical trajectory. In the smooth case the generalized rule reduces to the classical one. Hence, it is a proper extension of the classical theory. An error estimator is given and numerical results are presented. 
\end{abstract}

\noindent 
{\bf Keywords}
Automatic Differentiation, Lipschitz Continuity, Piecewise Linearization, Nonsmooth, Trapezoidal Rule, Energy Preservation, Continuous Output
 
\section{Introduction}
Many realistic computer models are nondifferentiable in that the functional relation between input and output variables is not smooth. We are particularly focusing on Lipschitz continuous models where the nondifferentiabilities have a special structure. We present a technique that handles such functions in the context of the numerical solution of ordinary differential equations (ODE's). It is based on algorithmic differentiation (AD) and generalizes this concept. For further information on the general theory of AD we refer to \cite{buch, Naumann2012}. \\
Consider the following initial value problem for an autonomous ODE.
	\begin{equation}\label{Eqn:IVP}
		\dot x(t) = F(x(t)),\;\;\;\; x(0) = x_0\,,
	\end{equation}
where $F:\R^n\rightarrow \R^n$ is assumed to be locally Lipschitz continuous. It is well known that this system has a unique local solution up to some time $\bar t > 0$.	For a time-step $h>0$ the exact solution of \eqref{Eqn:IVP} satisfies
	\begin{equation*}\label{Eqn:ODEIntegrated}
	      \hx = \cx + \int_0^h F(x(t)) \mathrm dt,
	\end{equation*}
with $\hx = x(h)$ and $\cx = x_0$. In general the integral cannot be evaluated exactly. 

In the derivation of the classical trapezoidal rule a linear approximation of the right hand side is utilized. The integration of these approximations yields a third order local truncation error if \(F\) is smooth. If \(F\) is only Lipschitz continuous, the truncation error will drop to second order where the solution trajectory intersects a nondifferentiability. 

The key idea to reestablish a third order truncation error everywhere is to approximate \(F\) by a \textbf{piecewise linear} function that reflects the structure of the nondifferentiabilities of \(F\). Employing this approach we will construct a generalized trapezoidal rule with the following three major benefits: 
\begin{itemize}
\item We achieve second order convergence in general and third order via Richardson extrapolation along solution trajectories with finitely many kink locations. 
\item A third order interpolating polynomial as continuous approximation of the trajectory will be given.
\item The method is energy preserving on piecewise linear Hamiltonian systems. 
\end{itemize}
\subsection*{Content and Structure}
The article is organized as follows: 
In Section 2 we introduce the necessary prerequisites from piecewise linear theory and generalized algorithmic differentiation. In Section 3 the generalized trapezoidal rule is constructed and convergence results are proved. The extrapolation results are presented in Section 4, as well as the geometric integration properties. The error estimator is derived in Section 5. The sixth section contains numerical results. We conclude with some final remarks.

\section{Piecewise Linear Model}\label{PLModel}
	\begin{defi}
		A continuous function $F:\R^n\rightarrow\R^m$ is called {\bf piecewise linear} if there exists a finite number of affine {\bf selection functions} $F_i:\R^n\rightarrow\R^m$ such that at any given $x\in \R^n$ there exist at least one index $i$ with $F(x)=F_i(x)$.
	\end{defi}
%		Here 'linear' is used in the sense of affine, i.e. allows a constant term
%		$F_i(0) \neq 0$.
%		
	Let the index set $I =\lbrace 1,...,k \rbrace$ of the selection functions be given.
	According to \cite[Prop. 2.2.2]{scholtespl} we can find subsets $M_1,...,M_l\subset I$ such that a scalar valued piecewise linear function $f$ can be represented as $$f(x) = \max_{1\leq i\leq l}\min_{j\in M_j} f_j(x)\, .	$$	This concept, which is called {\bf max-min representation}, naturally carries over to vector valued functions $F$, where we can find such a decomposition for every component of the image.
	
	Note that piecewise linear functions are globally Lipschitz continuous.
	For a further discussion of their properties we refer to \cite{scholtespl}. 	
	Next we consider continuous, piecewise differentiable functions $F$ 
	that can be computed
	by a finite program called \emph{evaluation procedure}.
	   An evaluation procedure is a composition %concatenation 
of so-called elementary functions which make up the atomic constituents of more complex functions.
Basically the selection of elementary functions for the library is arbitrary, as long as they comply with assumption (ED) (elementary differentiability, in \cite{buch}), meaning that they are at least once Lipschitz-continuously differentiable. %smooth. 
Common examples are: 
\[ \tilde \Phi\ :=\ \{ +, -, \ast, /, \sin, \cos, \tan, \cot, \exp, \log, \dots \}\; . \]

	In our case, we will allow the evaluation procedure of $F: D\subseteq\R^n\rightarrow\R^m$ to contain,
	in addition to the usual smooth elementary functions, the absolute value $\abs(x) = |x|$, i.e., our library is of the form $$\Phi\ := \tilde \Phi\ \cup\ \{\abs \}\; .$$
	Consequently, we can also handle the maximum and minimum of two values via the representation
	\begin{equation*}
		\max(u,v)=(u+v+|u-v|)/2 ,~~ \min(u,v)=(u+v-|u-v|)/2\; .
	\end{equation*}
	We call the resulting functions {\em composite piecewise differentiable}. These functions are locally Lipschitz continuous and almost everywhere differentiable in the classical sense. Furthermore they are differentiable in the sense of Bouligand and Clarke, cf. \cite{clarke}. 
	The evaluation procedure of $y=F(x)$ can be interpreted as a directed, acyclic
	graph from $x = (v_{1-n},...,v_0)$ to $y = (v_{l-m+1},...,v_{l})$,
	where the intermediate values $v_i, i=1,...,l$ are computed by binary operations
	$v_i = v_j\circ v_k$ with $\circ\in\lbrace +,-,* \rbrace$ and $v_j,v_k \prec v_i$
	or unary functions $v_i = \phi_i(v_j)$ with $v_j\prec v_i$, where $\phi_i\in \Phi$.
 The relation $\prec$ represents the
	data dependence in the graph of the evaluation procedure, which must be acyclic. 
    
    We now want
	to compute an incremental approximation $\Delta y = \Delta F(\rx;\Delta x)$ to $F(\rx + \Delta x)-F(\rx)$
	at a given $\rx$ and for a variable increment $\Delta x$.
	Assuming that all functions other than the absolute value
	are differentiable, we introduce the propagation rules
	\begin{equation}\label{Eqn:TangentLinearization}
		\begin{array}{llll}
			\Delta v_i=\Delta v_j\pm \Delta v_k  & \mbox{for }  \mathring{v}_i=\mathring{v}_j\pm \mathring{v}_k\; , &\\
			\Delta v_i=\mathring{v}_j\ast\Delta v_k+\Delta v_j\ast \mathring{v}_k  & \mbox{for } \mathring{v}_i=\mathring{v}_j\ast \mathring{v}_k\; , &  \\
			\Delta v_i=\mathring{c}_{ij}\Delta v_j \;\;\;\;\; \mbox{with } \mathring{c}_{ij}=\varphi'(\mathring{v}_j) & \mbox{for } \mathring{v}_i=\varphi_i(\mathring{v}_j)\neq \abs(\cdot)\; , \\
			\Delta v_i=\abs(\mathring{v}_j+\Delta v_j)-\abs(\mathring{v}_j)  &  \mbox{for } \mathring{v}_i=\abs(\mathring{v}_j)\; .
		\end{array}
	\end{equation}
	Whenever $F$ is globally differentiable (i.e., there are no $\abs$ calls
	in the evaluating procedure) we get $\Delta y = F'(\rx)\Delta x$,
	where $F'(\rx)\in \R^{m\times n}$ is the Jacobi-matrix.
	
	Note, that the propagation rules \eqref{Eqn:TangentLinearization} rely on the
	so called tangent approximation of $F$ at a certain point $\rx$. However,
	there are applications of piecewise linearization (especially concerning ODE integration)
	where one wants to consider approximations of $F$ based on secants.
	Given two points $\check x, \hat x$ we compute $\mathring x = (\check x + \hat x)/2$
	and $\mathring F= (F(\check x) + F(\hat x))/2$. Now we consider the
	secant approximation of $F$.
	\begin{equation}\label{Eqn:SecantApproximation}
		F(x) \approx \mathring F + \Delta F(\hat x,\check x; x-\mathring x)
	\end{equation}
	The essential features of the two piecewise linearization modes are displayed in in Figure \ref{Fig:Modes}.
	\begin{figure}[htp]
	  \centering

\subfigure{\includegraphics{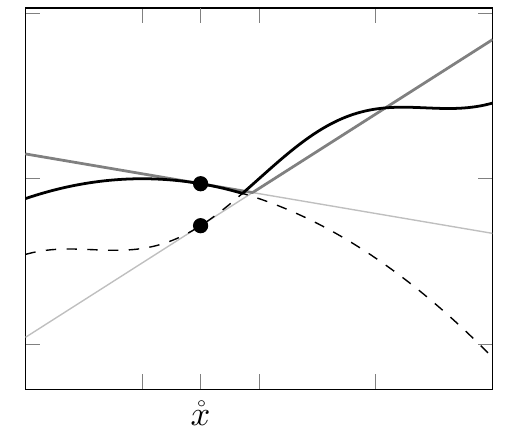}}
	  \hspace{.2cm}
	  \subfigure{\includegraphics{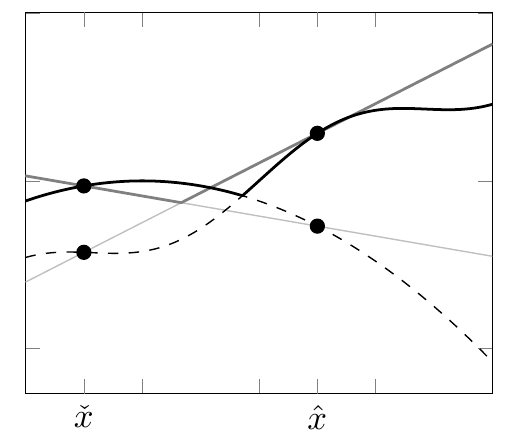}}
	  \caption{visualization of tangent and secant mode linearizations}\label{Fig:Modes}
	\end{figure}\\
	In order to utilize AD for the algorithmic computation of the secant approximation
	in \eqref{Eqn:SecantApproximation} we observe that in \eqref{Eqn:TangentLinearization}
	the intermediate values can be seen as functions evaluated at the unique reference
	point $\mathring x$, with $\mathring v_i = v_i(\mathring x)$. Now this reference point is the midpoint \(\rx=(\cx+\hx)/2\) and 
	the intermediate values are
	\begin{equation*}
		\mathring v_i = (\check v_i + \hat v_i)/2\; , \mbox{  with  }\; , \check v_i = v_i(\check x), \hat v_i = v_i(\hat x).
	\end{equation*}
	Replacing $v_i$ with $\mathring v_i$ in \eqref{Eqn:TangentLinearization} we observe
	that the first and second line are the same for the secant linearization.
	The third line has to be changed slightly, since the tangent slope $\mathring{c}_{ij}$
	has to be replaced by the secant slope
	\begin{equation*}
		\mathring c_{ij} = \left\lbrace\begin{array}{r l} \phi'_i(\mathring v_j) & \mbox{ if }\check v_j = \hat v_j\\ \dfrac{\hat v_i - \check v_i}{\hat v_j - \check v_j} &\mbox{otherwise}\end{array}\right.
	\end{equation*}
	The last rule stays unchanged except that now $\mathring v_i = (\check v_i + \hat v_i)/2 = (|\check v_j|+|\hat v_j|)/2$.
	Note that, if $\check x = \hat x$, we obtain $\Delta F(\rx,\Delta x) = \Delta F(\check x, \hat x;\Delta x)$.
	A complete discussion on this implementation topic can be found in \cite[Sec. 7]{mother}. Additionally, a division-free implementation and thus numerically stable implementation is discussed in \cite[Section 6]{NewtonPL}.

In contrast to the presentation in our previous papers we will now also use the nonincremental forms
 $$ \lozenge_{\rx}  F(x)     \; \equiv \;  F(\rx) + \Delta F(\rx; x-\rx )  $$
 and  
  $$ \lozenge_{\cx}^{\hx}  F(x)     \; \equiv \;  \tfrac{1}{2} ( F(\cx) + F(\hx))  + \Delta F(\cx, \hx; x-\rx )    $$

Hereafter we will denote by \(\lVert\cdot\rVert\equiv\lVert\cdot\rVert_\infty\) the infinity norm. Due to norm equivalence in finite dimensional spaces all inequalities to be derived take the same form in other norms, provided the constants are adjusted accordingly.

Moreover, we will frequently use the central Proposition 4.2 from \cite{NewtonPL}, which, in essence, states the following: 
\begin{itemize}
\item We can approximate piecewise differentiable functions with second order or bilinear error by piecewise linear tangent or secant models, respectively. 
\item Said models are globally Lipschitz continuous with the local Lipschitz constant \(\beta_F\) of the underlying function. 
\item The piecewise linear models are Lipschitz continuous with respect to their development points \(\rx\) or \(\cx,\hx\), respectively, and explicit bounds are given. 
\end{itemize}
 
\section{Generalized trapezoidal rule}
 
For the generalization of the trapezoidal rule, the linearization mode of choice is the secant mode. The construction largely follows the classical case (see, e.g. \cite{nummath}). We know that by the fundamental theorem of calculus it holds 
\[x(h)-x(0)=\int_0^h F(x(t))\mathrm dt\; .\]
Define \(\hx:=x(h)\) and \(\cx:=x(0)\). We then have: 
\[\hx-\cx=\int_0^h F(x(t))\mathrm dt=h\int_{-\frac12}^{\frac12}F\left(x\left(\tfrac h2+\tau h\right)\right)\mathrm d\tau\; .\]
Approximating \(x(t)\) by the secant
\[h\int_{-\frac12}^{\frac12}F\left(x\left(\tfrac h2+\tau h\right)\right)\mathrm d\tau=h\int_{-\frac12}^{\frac12}F\left(\cx\left(\tfrac12-\tau\right)+\hx\left(\tfrac12+\tau\right)\right)
  \mathrm d\tau+\mathcal O(h^3)\]
  and the latter expression, in contrast to the construction of the classical trapezoidal rule, by its piecewise linearization, where equality holds as consequence of \cite[Prop. 4.2]{NewtonPL}
 
\[h\int_{-\frac12}^{\frac12}F\left(\cx\left(\tfrac12-\tau\right)+\hx\left(\tfrac12+\tau\right)\right)\mathrm d\tau=h\int_{-\frac12}^{\frac12}\mathring F+\Delta F(\cx,\hx;\tau(\hx-\cx))\mathrm d\tau + 
  \mathcal O(h^3)\; \]
we arrive at the following defining equation, which was introduced by Griewank in \cite{mother}:
\begin{equation}
\label{gen_trap}
\hat x-\check x=h\integ{\left[\mathring F+\Delta F(\check x,\hat x;(\hat x-\check x)t)\right]}\; ,
\end{equation}
where
 \(\check x\) is the current and \(\hat x\) the next point on the numerical trajectory and  \(\mathring F=\frac12\left[F(\hx)+F(\cx)\right]\)  \cite[S.~21]{mother}. 
 
 This construction offers two major benefits: First, it has the desired property of having a third order local truncation error even when integrating through a kink. Second, it is consistent with the classical trapezoidal rule in the sense that in case of a smooth function $F$ the generalized formula reduces to the classical one and thus represents a proper extension of the classical theory \cite[S.~21]{mother}. It then holds: 
\begin{align*}
\hat x-\check x&=h\integ{\left[\mathring F+\Delta F(\hat x,\check x;(\hat x-\check x)t)\right]} \\
&=h\integ{\left[\mathring F+\left[F(\hat x)-F(\check x)\right]t\right]} = h \mathring F\; .
\end{align*}
To simplify the equality (\ref{gen_trap}) (which also yields a simplification of next sections' convergence proof) we assume without loss of generality that our current point is the initial value  \(\cx:=x(0)=0\). We then get $\check x= 0, x:=\hat x$ and \(\mathring x = x/2\) and the above formula simplifies as follows \cite[S.~22]{mother}:  
\begin{align*}
x&=h\integ{\left[\mathring F+\Delta F\left(0,x;xt\right)\right]} \\
&=\frac h2 \left[F(0)+F(x)\right]+h\int^\frac12_{\frac12}\Delta F\left(0,x;xt\right)\mathrm dt=:hG(x)\; .
\end{align*}
A generalized midpoint rule can be derived in an analogous fashion (cf.\;\cite[Section 5.2]{mother}): 
\[\hat x-\check x=h\integ{\left[F(\rx)+\Delta F(\rx;(\hat x-\check x)t)\right]}\, .\]
However, while many of the results derived for the trapezoidal rule hold for the midpoint rule as well, the latter does have certain practical disadvantages. For example the error estimator developed below cannot be applied to it. We thus limit our attention to the trapezoidal rule.

\subsection*{Convergence results}

It was shown in \cite{mother} that the generalized midpoint rule has convergence order two (with respect to the step size $h$). We will now prove the analogous result for the generalized trapezoidal rule. Note that the arguments we employ for this are mostly similar to those used in the aforementioned reference. 

\begin{theorem}[Griewank, Proposition 5.2]
Suppose, $F$ is piecewise composite differentiable in the sense defined above and Lipschitz continuous in an open neighborhood 
 \(\mathcal D\) of the origin \(\check x=0\). Then there is a bound 
\(\bar h>0\) for the step size, such that for all \(h<\bar h\) the function \(hG(x)\) maps some closed ball \(B_\rho(0)\subset\mathcal D\), \(\rho>0\) into itself and is contractive. Moreover, the unique fixed point \(x_h\in B_\rho(0)\) satisfies the equality 
 \[x_h-x(h)=\mathcal O(h^3)\; ,\] where $x(t)$ is a solution of the differential equation $\dot x(t)=F(x(t))$ with $x(0)=0$.
\end{theorem}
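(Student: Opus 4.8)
The plan is to prove the statement in three stages that mirror the classical convergence argument: first establish existence and uniqueness of the fixed point $x_h$ via the Banach fixed point theorem, then bound the local consistency error $x(h)-hG(x(h))$, and finally combine the two through the standard perturbed-fixed-point estimate. Because the assertion concerns a single step starting from $x(0)=0$, no global error accumulation has to be controlled, which keeps the third stage short.

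For the first stage I would exploit that, by the cited Proposition~4.2 of \cite{NewtonPL}, the increment $\Delta F(0,x;\cdot)$ vanishes at the zero increment and is globally Lipschitz in its increment argument with the local Lipschitz constant $\beta_F$ of $F$, and moreover Lipschitz with respect to the development point $x$ with an explicit bound. Using $\|F(x)\|\le\|F(0)\|+\beta_F\rho$ on $B_\rho(0)$ and $\|\Delta F(0,x;xt)\|\le\beta_F|t|\,\|x\|$, integration over $t\in[-\tfrac12,\tfrac12]$ yields a bound of the form $\|G(x)\|\le\|F(0)\|+\tfrac34\beta_F\rho$, so that $\|hG(x)\|\le\rho$ once $h\le\rho/(\|F(0)\|+\tfrac34\beta_F\rho)$, giving the self-mapping property. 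For contractivity I would split $G(x)-G(y)$ into the averaged-endpoint term $\tfrac12(F(x)-F(y))$ and the integral term, estimate the latter by adding and subtracting $\Delta F(0,y;xt)$ to separate the development-point dependence from the increment dependence, and collect all contributions into a single Lipschitz constant $L$ for $G$. Choosing $\bar h<\min\{\rho/(\|F(0)\|+\tfrac34\beta_F\rho),\,1/L\}$ makes $hG$ a contraction on $B_\rho(0)$, and Banach's theorem delivers the unique fixed point $x_h$.

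The heart of the proof is the consistency estimate, and I would follow the derivation preceding \eqref{gen_trap} but now track the error terms. First, since $\dot x=F(x)$ is Lipschitz in $t$ (as $F$ is Lipschitz and $\|\dot x\|=\|F(x)\|$ is bounded on the ball), the trajectory is $C^{1,1}$, so replacing $x(\tfrac h2+\tau h)$ by the secant $x(h)(\tfrac12+\tau)$ through $x(0)=0$ and $x(h)$ incurs a pointwise interpolation error of order $\mathcal O(h^2)$; applying the Lipschitz bound for $F$ and the prefactor $h$ in front of the integral turns this into an $\mathcal O(h^3)$ contribution. Second, replacing $F$ by its secant piecewise linearization is controlled by Proposition~4.2 of \cite{NewtonPL}: the bilinear error bound gives at the interpolated argument a pointwise estimate of order $\|x(h)\|^2(\tfrac14-\tau^2)=\mathcal O(h^2)$ that holds \emph{uniformly, including across kinks}, and the prefactor $h$ again promotes it to $\mathcal O(h^3)$. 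Summing both contributions yields the local consistency relation $x(h)=hG(x(h))+\mathcal O(h^3)$.

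Finally I would combine the two stages. Since $x(h)=\mathcal O(h)$ lies in $B_\rho(0)$ for small $h$, and $x_h=hG(x_h)$, subtraction gives $x_h-x(h)=h\big(G(x_h)-G(x(h))\big)-\big(x(h)-hG(x(h))\big)$. Taking norms and using the contraction estimate $\|G(x_h)-G(x(h))\|\le L\|x_h-x(h)\|$ produces $(1-hL)\|x_h-x(h)\|\le\mathcal O(h^3)$, and since $hL\le\tfrac12$ for $h<\bar h$ the factor $1-hL$ is bounded away from zero, whence $\|x_h-x(h)\|=\mathcal O(h^3)$. The main obstacle is the consistency step, specifically invoking Proposition~4.2 to secure a second-order linearization error that remains uniform as the trajectory passes through a nondifferentiability of $F$; this is exactly the place where the classical tangent-based trapezoidal rule loses an order and where the piecewise-linear secant model restores it.
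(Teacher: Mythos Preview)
Your proposal is correct and follows essentially the same route as the paper: a Banach fixed-point argument for existence of $x_h$ via Lipschitz estimates drawn from \cite[Prop.~4.2]{NewtonPL}, a consistency estimate obtained by first bounding the $C^{1,1}$ secant-interpolation error of the trajectory and then the bilinear error of the piecewise linear secant model, and a final perturbed-fixed-point step. The only notable difference is cosmetic: the paper spells out the $C^{1,1}$ interpolation bound via an explicit Rolle-type auxiliary function $g(t)$ and phrases the last step as ``$I-hG$ has a Lipschitz inverse,'' whereas you invoke the standard $C^{1,1}$ interpolation estimate directly and close with the equivalent inequality $(1-hL)\|x_h-x(h)\|\le\mathcal O(h^3)$.
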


\begin{proof}
 \(F\) is, by assumption, piecewise composite differentiable and thus locally Lipschitz continuous. Moreover, by \cite[Prop. 4.2]{NewtonPL} we know that the piecewise linearization is Lipschitz continuous. Consequently, with \cite[Prop. 4.2]{NewtonPL} there exists a ball \(\mathcal B_\rho(0)\) about the base point 
 \(x_0=0\), such that for all \(x\in\mathcal B_\rho(0)\) there exists a \(\beta_F>0\), such that it holds 
 \[\norm{F(x)-F(0)}\leq \beta_F\rho \quad\text{ as well as }\quad \norm{\lozenge_0^x F(0)-\lozenge_0^x F(x)}\leq \beta_F\rho\; .\]
From \cite[Prop. 4.2 (iii)]{NewtonPL} we can also derive that:
\[\norm{\lozenge_0^{x_1}F(x)-\lozenge_0^{x_2}F(x)}\leq\gamma_F\norm{x_1-x_2}\norm{x} \]
for \(\gamma_F\) as defined there. Employing this we get: 
 \begin{align*}
  &\norm{G(\tilde x)-G(x)} = \norm{\int_{-\tfrac 12}^{\tfrac 12}\lozenge_0^{\tilde x}F(\tfrac{\tilde x}2+t\tilde x)-\lozenge_0^{x}F(\tfrac{x}2+tx)}\mathrm dt\\ 
  &\leq \int_{-\tfrac 12}^{\tfrac 12} \norm{\lozenge_0^{\tilde x}F(\tfrac{\tilde x}2+t\tilde x)-\lozenge_0^{x}F(\tfrac{x}2+tx)}\mathrm dt \\
  &\leq \int_{-\tfrac 12}^{\tfrac 12} \norm{\lozenge_0^{\tilde x}F(\tfrac{\tilde x}2+t\tilde x)-\lozenge_0^x F(\tfrac{\tilde x}2+t\tilde x)} + \norm{\lozenge_0^x F(\tfrac{\tilde x}2+t\tilde x)-\lozenge_0^{x}F(\tfrac{x}2+tx)}\mathrm dt \\
  &= \int_{-\tfrac 12}^{\tfrac 12} \gamma_F\norm{\tilde x-x}\norm{\tfrac{\tilde x}2+t\tilde x} + \beta_F\norm{\frac{\tilde x}2+t\tilde x-\left(\tfrac x2+tx\right)}\mathrm dt \\
  &= \norm{\tilde x-x}(\gamma_F\norm{\tilde x}+\beta_F)\int_{-\tfrac 12}^{\tfrac 12}\left|t+\tfrac 12\right|\mathrm dt = \tfrac 12 (\gamma_F\norm{\tilde x} + \beta_F)\norm{\tilde x-x} \\
  &\leq \tfrac 12(\gamma_F\rho+\beta_F)\norm{\tilde x-x} =: \tilde\beta\norm{\tilde x-x}
 \end{align*}
where \(\tilde \beta=\frac12(\gamma_F\rho+\beta_F)\) is a Lipschitz constant for \(G(x)\). Consequently, \(h\tilde \beta\) is a Lipschitz constant for  \(hG(x)\). Therefore \(hG(x)\) is a contraction if we can ensure that \(h\tilde \beta<1\) is the case for \(h\) sufficiently small. Since we know it holds 
\[G(0)=\frac12\left[F(0)+F(0)\right]+\integ{\incf{0,0;0}}=F(0)\]
 and 
 \[\norm{hG(x)-hG(0)}\geq\norm{hG(x)}-\norm{hG(0)},\] 
 it follows that
 \[\norm{hG(x)}\leq \norm{hG(0)}+h\tilde \beta\norm{x}=h\norm{F(0)}+h\tilde \beta\norm{x}<\rho\] 
 for \(h\) 
sufficiently small. Hence, \(hG(x)\) maps the ball \(\mathcal B_\rho(0)\) into itself. With this knowledge we can apply Banachs fixed point theorem and get that the fixed point iteration \(hG(x)\) has a unique fixed point \(x_h\in\mathcal B_\rho(0)\). We now consider the trajectory \(x(t)\) of the exact solution of the differential equation, which is in \(C^{1,1}\), since \(F\) is Lipschitz continuous. We approximate the latter with the secant \((t+0.5)x(h)\)
for \(-0.5\leq t\leq0.5\). This corresponds to a polynomial interpolation with a first order polynomial. Using the following auxiliary  we can thus estimate the interpolation error. Define
\begin{align*} 
g(t)&:=x((t+0.5)h)-(t+0.5)x(h)-\frac{\frac14-t^2}{\frac14-\tau^2}\left[x((\tau+0.5)h)-(\tau+0.5)x(h)\right]\\
&=:\delta(t)-\frac{\frac14-t^2}{\frac14-\tau^2}\delta(\tau)\; ,
\end{align*}
for a \(\tau\in\left[-\frac12,\frac12\right]\). By construction this function is in \(C^{1,1}\) and has the three roots \(-\frac12,\frac12,\tau\). Hence its derivative  
\[g^\prime(t)=\delta^\prime(t)-\frac{-2t}{\frac14-\tau^2}\delta(\tau)\]
has two roots \(t_1,t_2\). For these it holds: 
\[\delta^\prime(t_1)=\frac{-2t_1}{\frac14-\tau^2}\delta(\tau)\]
and 
\[\delta^\prime(t_2)=\frac{-2t_2}{\frac14-\tau^2}\delta(\tau)\; .\]
Then we have 
\[\delta^\prime(t_2)-\delta^\prime(t_1)=\frac{2\delta(\tau)}{\frac14-\tau^2}(t_1-t_2).\]
We know about \(\delta^\prime(t)\) that \(\delta^\prime(t)=x^\prime((t+0.5)h)h-x(h)\) and consequently
\begin{align*}
 \norm{\delta^\prime(t_2)-\delta^\prime(t_1)}&=\norm{F((t_2+0.5)h)h-F((t_1+0.5)h)h} \\
 &= h\norm{F((t_1+0.5)h)-F((t_2+0.5)h)}\leq \beta_F h^2\norm{t_1-t_2}
\end{align*}
since \(F\) is Lipschitz continuous. This yields
\[\norm{\frac{\delta^\prime(t_2)-\delta^\prime(t_1)}{t_1-t_2}}=\norm{\frac{2\delta(\tau)}{\frac14-\tau^2}}\leq\beta_F h^2\]
and accordingly
\[\norm{\delta(\tau)}\leq\frac12 \beta_F h^2\left(\frac14-\tau^2\right)\; .\]
Hence, the following inequality holds: 
\[\norm{x((t+0.5)h)-(t+0.5)x(h)}\leq\frac{\beta_F}2\left(\frac14-t^2\right)h^2\; ,\]
where \(t\in(-\frac12,\frac12)\). Moreover, by the fundamental theorem of calculus, we know that  
\[\norm{x(h)-h\integ{F(x(t+0.5)h)}}=\norm{x(h)-x(h)+x(0)}=0\; .\] This gives 
\begin{align*}
0&=\norm{x(h)-h\integ{F(x((t+0.5)h))}} \\
&=\norm{x(h)-h\integ{F((t+0.5)x(h))}}-\mathcal O(h^3)\; ,
\end{align*}
since it holds 
\begin{align*}
 &\norm{x(h)-h\integ{F(x((t+0.5)h))}}-\norm{x(h)-h\integ{F((t+0.5)x(h))}} \\
 &\leq \norm{x(h)-h\integ{F(x((t+0.5)h))}-\left[x(h)-h\integ{F((t+0.5)x(h))}\right]} \\
 &= \norm{h\integ{F(x((t+0.5)h))-F((t+0.5)x(h))}} \\
 &\leq h\integ{\norm{F(x((t+0.5)h))-F((t+0.5)x(h))}} \\
 &\leq h\beta_F\integ{\norm{x((+0.5)h)-(t+0.5)x(h)}} \leq h\beta_F\integ{\frac{\beta_F}2\left(\frac14-t^2\right)h^2} \\
 &=h \frac{\beta_F^2 h^2}{12} \in \mathcal O(h^3)\; .
\end{align*}
But, reapplying \cite[Prop. 4.2 (ii)]{NewtonPL}, we also get 
\begin{align*}
 &\norm{x(h)-hG(x(h))}-\norm{x(h)-\integ{F((t+0.5)x(h))}} \\ 
 &\leq h\norm{G(x(h))-\integ{F((t+0.5)x(h))}} \\
 &= h\norm{\frac12\left[F(x(h))-F(0)\right]+\integ{\incf{0,x(h);tx(h)}-F((t+0.5)x(h))}} \\
 &\leq h\integ{\norm{F((t+0.5)x(h))-\frac12\left[F(0)-F(x(h))\right]-\incf{0,x(h);tx(h)}}} \\
 &=: h\integ{\norm{F(\tilde x)-\mathring F-\incf{0,x(h);\tilde x-\mathring x}}} \\
 &\leq h\tfrac{1}{2}\gamma_F\integ{\norm{\tilde x-0}\norm{\tilde x-x(h)}} \\ 
 &= h\tfrac{1}{2}\gamma_F\integ{\norm{(t+0.5)x(h)}\norm{(t-0.5)x(h)}} \\
 &= h\tfrac{1}{2}\gamma_F\integ{\betrag{t+0.5}\betrag{t-0.5}\norm{x(h)}^2} \\
 &= h\gamma_F\frac{\norm{x(h)}^2}{12} \leq h\gamma_F\frac{h^2}{12} \in \mathcal O(h^3)\; .
\end{align*}
 The last inequality holds, since \(x(h)\in\mathcal B_\rho(0)\). Now the mapping \(x-hG(x)\) has a locally Lipschitz continuous inverse. As we know that \(h\beta<1\) for \(h\) sufficiently small, it follows from Banachs fixed point theorem that the operator \(I+hG\) consisting of the identity and a Lipschitz continuous function with constant smaller 1 has a Lipschitz continuous inverse function. Since 
 \[x(h)-hG(x(h))-\left[x_h-hG(x_h)\right]=\mathcal{O}(h^3)-0,\]
 we can conclude that for the images of the inverse function it holds 
 \[\norm{x(h)-x_h}\leq \tilde \beta\norm{\left[x(h)-hG(x(h))\right]-\left[x_h-hG(x_h)\right]}=\mathcal{O}(h^3)\]
 for a \(\tilde \beta>0\), as \(x_h=\left(I-hG\right)^{-1}(0)\) and \(x(h)=\left(I-hG\right)^{-1}(s(h))\), with \(s(h)\in\mathcal O(h^3)\).  
\end{proof} 

\section{Properties of the presented methods}

We say that the solution of an ODE has finite transition, if the solution trajectory intersects the nondifferentiabilities of the right hand side in at most a finite point set. This has an impact on the overall performance of the considered methods. If said criterion is violated the classical trapezoidal rule drops to first order global convergence. This is not the case for the generalized rule, because the third order local truncation error is maintained even on nondifferentiabilities. This leads to the improved convergence order of the generalized method on these problems as illustrated in the table below. 

\begin{minipage}[c]{\textwidth}
\centering
\vspace{10pt}
\begin{tabular}{| l | c | c | c |}
\hline
Without Finite Trans. & on smooth parts & on kinks & globally  \\
\hline \hline
Classical Rule & $\mathcal{O}(h^3)$ & $\mathcal{O}(h^2)$ & $\mathcal{O}(h)$\\ 
Generalized Rule & $\mathcal{O}(h^3)$ & $\mathcal{O}(h^3)$ & $\mathcal{O}(h^2)$\\
\hline
\end{tabular}
\vspace{10pt}
\end{minipage}
Of course the above condition is met in most continuous examples. However even in this case we can still expect a gain as described below. Note that finite transition does not require the trajectory to be transversal to the sets of kink locations. The latter, stronger property is required for efficient event handling by computing the roots of switching functions \cite{events}. They must be singular a tangential transition points. 

\subsection*{Richardson Extrapolation}
In the following we assume that all solutions have finite transition. It is well known that the local truncation error for the Romberg extrapolation is of order five for sufficiently smooth function \(F\). Hence its maximal order that can be expected for the global error is four. However if said \(F\) is only piecewise differentiable, then the order collapses on the kinks, posing an upper bound for the global error. Thus the overall global accuracy in the case of Romberg extrapolation is determined by the respective behavior of the investigated method on the nondifferentiablities of \(F\). Here the generalized method gains its significance, as its accuracy only collapses to an error of order three, as opposed to order two for the classical method. This is the error that the respective methods would achieve without extrapolation. It is lower for the classical method, because the linear approximation used in its construction is only of first order on the kinks as opposed to second order for the piecewise 
linear approximation of the generalized method (also see the table below). 

\begin{minipage}[c]{\textwidth}
\centering
\vspace{10pt}
\begin{tabular}{| l | c | c | c |}
\hline
With Finite Transition & on smooth parts & on kinks & globally  \\
\hline \hline
Classical Rule & $\mathcal{O}(h^3)$ & $\mathcal{O}(h^2)$ & $\mathcal{O}(h^2)$\\ 
Generalized Rule & $\mathcal{O}(h^3)$ & $\mathcal{O}(h^3)$ & $\mathcal{O}(h^2)$\\
\hline
Class. w/ Romberg & $\mathcal{O}(h^5)$ & $\mathcal{O}(h^2)$ & $\mathcal{O}(h^2)$\\
Gen. w/ Romberg & $\mathcal{O}(h^5)$ & $\mathcal{O}(h^3)$ & $\mathcal{O}(h^3)$ \\
\hline
\end{tabular}
\vspace{10pt}
\end{minipage}
One might have hoped that extrapolation would also yield a local of \(\mathcal O(h^5)\) on kinks. The following, easily verifiable example shows that this is not the case.  

\begin{lemma}
The analytical solution to the nonsmooth ODE
\vspace{-5pt}
\begin{align*}
\dot x &= a\lvert x\rvert + b x + 1 = \begin{cases}(b-a)x+1 & x\geq 0\\ (b+a)x+1 & else\end{cases} 
\end{align*}
is 
\begin{align*}
  x_+(t) &= \frac{e^{t(b-a)}-1}{b-a} \quad\text{and}\quad x_-(t) = \frac{e^{t(b+a)}-1}{b+a} \,.
\end{align*}
\end{lemma}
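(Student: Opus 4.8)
The plan is to exploit that on each of the two half-lines the right hand side is affine in $x$, so that the problem decouples into two constant-coefficient scalar linear ODEs of the elementary type $\dot x = \alpha x + 1$, whose solutions are classical.

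First I would pin down the (implicit) initial condition. Both candidate formulas give $x_+(0) = x_-(0) = (e^0 - 1)/(b \mp a) = 0$, so the initial value problem under consideration is the one with $x(0) = 0$; that is, the trajectory is launched exactly from the kink locus $\{x = 0\}$. Next, on the region $x \geq 0$ the right hand side is $(b-a)x + 1$ and on $x < 0$ it is $(b+a)x + 1$, as recorded in the statement. For the scalar problem $\dot x = \alpha x + 1$ with $x(0) = 0$, the integrating factor $e^{-\alpha t}$ (equivalently, variation of constants) produces the unique solution $x(t) = (e^{\alpha t} - 1)/\alpha$. Specializing $\alpha = b - a$ and $\alpha = b + a$ reproduces $x_+$ and $x_-$ verbatim (with the degenerate cases $b = \pm a$ giving the obvious limit $x(t) = t$).

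Rather than rederiving, I would simply verify the claim by differentiation: $\dot x_+(t) = e^{t(b-a)} = (b-a)\,x_+(t) + 1$, and symmetrically $\dot x_-(t) = (b+a)\,x_-(t) + 1$, each with the correct value at $t = 0$. This settles the lemma for each branch with no residual computation.

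The only point requiring a moment's thought is the behavior at the kink $x = 0$, where the two selection functions meet and one must decide which branch is active. Here I would note that whenever $x = 0$ the velocity is $\dot x = a\cdot 0 + b\cdot 0 + 1 = 1 > 0$, independently of $a$ and $b$, so the vector field points strictly into $\{x > 0\}$. Hence a trajectory with $x(0) = 0$ enters the region $x \geq 0$ for $t > 0$ and is governed by $x_+$ there, while $x_-$ governs the continuation into $t < 0$ (where $x$ decreases through $0$); the glued trajectory
\[
x(t) = \begin{cases} x_+(t), & t \geq 0,\\ x_-(t), & t < 0, \end{cases}
\]
is $C^1$ across $t = 0$ because $x_+(0) = x_-(0) = 0$ and $\dot x_+(0) = \dot x_-(0) = 1$. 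This transversal, isolated crossing also confirms that the example has finite transition, so it is a legitimate test case for the extrapolation discussion; and since each branch is an elementary exponential, the closed forms are exact and infinitely differentiable away from the kink, which is precisely what the local truncation and Romberg error computations referenced above require.
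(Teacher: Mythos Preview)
Your proof is correct and is exactly what the paper expects: the lemma is offered there without proof, merely as an ``easily verifiable example,'' and your direct verification by differentiation together with the check of the initial value $x(0)=0$ is the intended argument. Your additional remarks on the transversal crossing at the kink and the resulting finite transition are not needed for the lemma itself but are appropriate context for its use in the surrounding discussion.
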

If now we set \(a=2.25\) and \(b=\)-1.25, we can calculate the local truncation error for a single step over the kink \(x=0\). For the classical method it amounts to  \(\frac{27h^2}{64}+\frac{677h^3}{1536}+\mathcal{O}(h^4)\) and for the generalized method to \(\frac{139h^3}{3072}+\mathcal{O}(h^4) \). Using Romberg extrapolation does not improve the order of the error. In this case we get \(\frac{3h^2}{64}+\frac{51h^3}{512}+\mathcal{O}(h^4)\) for the classical method and \(\frac{9h^3}{1024}+\mathcal{O}(h^4)\) for the generalized method. Consequently, as opposed to the classical case, Romberg extrapolation only yields a third order global convergence instead of the usual \(\mathcal O(h^4)\). 

\subsection*{Remark on Geometric Integration}

Among ODE integration methods, those which allow for the preservation of certain geometric properties, especially energy preservation and symplecticness, play an important role in current research. The presented method is part of this category for piecewise linear Hamiltonian systems. To show this, we note that the piecewise linearization of a piecewise linear function is the function itself. Hence, for a piecewise linear right hand side, the formula for the generalized trapezoidal rule simplifies as follows:

\[ \hx - \cx = h \int_{-\frac12}^{\frac12} F\left(\frac{\hx + \cx}{2} + t(\hx - \cx)\right) \mathrm dt \,.\]
In \cite{quispel} the concept of a so-called average vector field method (AVF) is introduced in terms of the following formula:

\[ \hx - \cx = h \int_0^1 F((1-s)\cx + s\hx)\mathrm ds\]
An AVF is energy conserving on Hamiltonian systems. However, since for this method an exact integration of the right hand side is necessary, there only exists a straightforward implementation for linear systems. But the generalized trapezoidal rule performs an exact integration of the right hand side in case \(F\) is piecewise linear. Thus it is energy preserving on piecewise linear Hamiltonians. 

\section{Continuous Output}
It is well known that for smooth systems the classical trapezoidal rule gives us a continuous output function. On a single integration step, this takes the form of a quadratic polynomial \(p\colon[0,h]\to\R^n\) with a third order approximation error. It is given by
\[p(t) = \cx + \int_0^t F(\cx) + \tfrac \tau h(F(\hx)-F(\cx))\ \mathrm d\tau\quad \text{for }t\in[0,h]\,.\]
This polynomial is tangential to the numerical trajectory in the sense that its values at \(t=0\) and \(t=h\) equal \(\cx\) and \(\hx\), respectively, and its slope matches the vector field of the numerical solution in said points. The integral can be evaluated and gives an explicit formula
\[ p(t) = \frac{F(\hx)-F(\cx)}{2h}t^2 + F(\cx)t+\cx\,.\]
However, in the case of a step through a kink, the linear approximation of \(F\) used in the trapezoidal rule is only a first order approximation. Thus the above polynomial is only a second order approximation of the trajectory, which is not sufficient for certain applications like the construction of an error estimator that we want to pursue below. Fortunately the generalized trapezoidal rule allows for the construction of such a polynomial with the desired properties for nonsmooth functions. It is derived in an analogous way and takes the form: 
\[p(t) = \cx + \int_0^t \lozenge_{\cx}^{\hx} F(\cx+\tfrac \tau h(\hx-\cx))\mathrm d\tau\quad\text{for }t\in[0,h]\,.\]
Of course it is now a piecewise quadratic function which consists of some \(p_i\) with
\[p_i(t) = \left. p\right|_{{[h\tau_i,h\tau_{i+1}]}}(h\tau_i+t)=a_i t^2+b_i t +c_i\,.\]
Here \(h\tau_i\) is the length of the step to the next kink, or to \(\hx\) if there is no kink. These values are contained in the piecewise linearization of \(F\) and are calculated during the integration. We also need the intermediate values \(\cx_{\to,i}\) of the numerical trajectory at the kinks for which it holds: 
\[\cx_{\to,i} = \cx + h\int_0^{\tau_i}\lozenge_{\cx}^{\hx}F(\cx+t(\hx-\cx))\mathrm dt\,.\]
They are already calculated as well, since the integral from \(\cx\) to \(\hx\) is simply the sum of the values. If there are \(k\) kinks in the observed time step, we have \(\tau_0=0\) and \(\tau_{k+1}=1\), such that \(\cx_{\to,k+1}=\hx\). Consequently the derivatives \(\dot \cx_{\to,i}\) are given by \(\dot \cx_{\to,i}=F(\cx_{\to,i})\). We can now derive the coefficients of said interpolants \(p_i\), since we know that: 
\begin{align*}p_i(0) &= \cx_{\to,i} \quad\Longrightarrow\quad c_i = \cx_{\to,i} \,,  \\
p_i^\prime(0) &= \dot \cx_{\to,i} \quad\Longrightarrow\quad b_i=\dot \cx_{\to,i} \,, \\
p_i^\prime(h(\tau_{i+1}-\tau_i)) &= \dot \cx_{\to,i+1} \vspace{.5cm} \Longrightarrow\quad a_i=\frac{\dot \cx_{\to,i+1}-\dot \cx_{\to,i}}{2h(\tau_{i+1}-\tau_i)} \,.
\end{align*}
This means \(p\) is given by
\[p\left(\tilde t\right)=p(h\tau_i+t)=p_i(t)=\frac{\dot \cx_{\to,i+1}-\dot \cx_{\to,i}}{2h(\tau_{i+1}-\tau_i)}t^2 + \dot \cx_{\to,i}t + \cx_{\to,i} \quad\text{for }\tilde t\in[h\tau_i,h\tau_{i+1}],i\in\{0,\dots,k\} \,.\]
This polynomial has correct values and slopes at \(\cx, \hx\) and all the kinks. Consequently it is a third order approximation on the intervals between consecutive kinks or \(\cx\) or \(\hx\), respectively and thus everywhere. It is depicted in figure \ref{fig:interpol}. 
\begin{figure}
\centering
\includegraphics[width=0.7\textwidth]{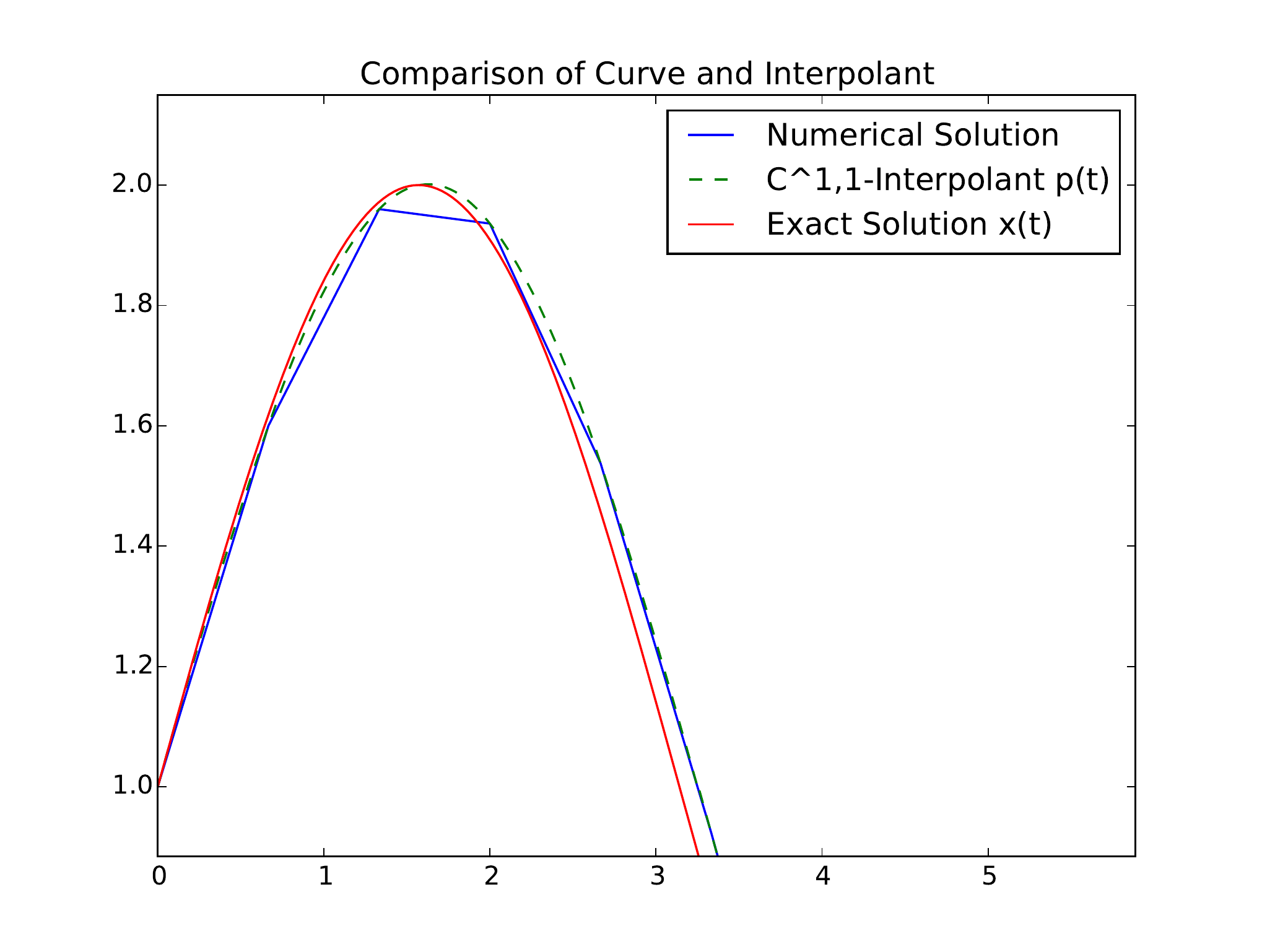}
\caption{Continuous Output, Rolling Stone Example, \(h=\tfrac 23\)}
\label{fig:interpol}
\end{figure}

\section{Error Estimation and Time-Stepping}
In this section we will construct an error estimator for the local truncation error of the generalized trapezoidal rule. 

\begin{proposition}[Error estimator]
The error estimator has the form
\begin{align*}
&\norm{x(h) - x_h} \leq \tfrac 1{12}h\gamma_F\norm{\hx-\cx}^2\\
+&\ \beta_F\sum_{i=0}^k\int_0^{h(\tau_{i+1}-\tau_i)}\norm{\frac{\dot \cx_{\to,i+1}-\dot \cx_{\to,i}}{2h(\tau_{i+1}-\tau_i)}t^2 + \left(\dot \cx_{\to,i} - \frac{\hx-\cx}{h}\right)t + \cx_{\to,i} - \cx - \tau_i(\hx-\cx)}\mathrm dt \,.
\end{align*}
\label{error_est_formula}
\end{proposition}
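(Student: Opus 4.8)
My plan is to estimate the error directly, without passing through an inverse‑Lipschitz amplification factor, by exploiting that the numerical point \(\hx=x_h\) \emph{exactly} satisfies the defining equation \eqref{gen_trap}. Under the substitution \(\tilde t=(t+\tfrac12)h\) the secant reads \(s(\tilde t):=\cx+\tfrac{\tilde t}{h}(\hx-\cx)=\rx+t(\hx-\cx)\), so the scheme takes the form \(x_h-\cx=h\integ{\lozenge_{\cx}^{\hx}F(s(\tilde t))}\), while the fundamental theorem of calculus gives the exact relation \(x(h)-\cx=h\integ{F(x(\tilde t))}\). Subtracting these two identities yields the representation
\[ x(h)-x_h=h\integ{\bigl[F(x(\tilde t))-\lozenge_{\cx}^{\hx}F(s(\tilde t))\bigr]}\, , \]
which is exact and carries no extra constant. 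This is the quantity I would bound.

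The next step is to split the integrand by adding and subtracting \(F(s(\tilde t))\), producing a \emph{model error} \(F(s(\tilde t))-\lozenge_{\cx}^{\hx}F(s(\tilde t))\) and a \emph{trajectory error} \(F(x(\tilde t))-F(s(\tilde t))\). For the model error I would apply \cite[Prop. 4.2 (ii)]{NewtonPL} exactly as in the convergence proof above: with \(\tilde x=s(\tilde t)\) one has \(\norm{F(\tilde x)-\lozenge_{\cx}^{\hx}F(\tilde x)}\leq\tfrac12\gamma_F\norm{\tilde x-\cx}\,\norm{\tilde x-\hx}\), and since \(\norm{\tilde x-\cx}=\betrag{t+\tfrac12}\,\norm{\hx-\cx}\) and \(\norm{\tilde x-\hx}=\betrag{t-\tfrac12}\,\norm{\hx-\cx}\), integrating \(\betrag{t+\tfrac12}\betrag{t-\tfrac12}\) over \([-\tfrac12,\tfrac12]\) produces the factor \(\tfrac16\). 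Multiplying by \(h\) gives precisely the first summand \(\tfrac1{12}h\gamma_F\norm{\hx-\cx}^2\); it involves only \(\cx\), \(\hx\) and the model, hence is fully computable and is an honest upper bound.

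For the trajectory error, Lipschitz continuity of \(F\) yields \(\norm{F(x(\tilde t))-F(s(\tilde t))}\leq\beta_F\norm{x(\tilde t)-s(\tilde t)}\), so after undoing the substitution this contribution is \(\beta_F\int_0^h\norm{x(\tilde t)-s(\tilde t)}\,\mathrm d\tilde t\). The decisive move is to replace the \emph{unknown} exact trajectory \(x(\tilde t)\) by the computable continuous‑output polynomial \(p(\tilde t)\) of the previous section. Breaking \([0,h]\) at the kink abscissae \(h\tau_i\), substituting \(\tilde t=h\tau_i+t\), and inserting the explicit formula \(p_i(t)=\tfrac{\dot\cx_{\to,i+1}-\dot\cx_{\to,i}}{2h(\tau_{i+1}-\tau_i)}t^2+\dot\cx_{\to,i}t+\cx_{\to,i}\) together with \(s(h\tau_i+t)=\cx+\tau_i(\hx-\cx)+\tfrac{t}{h}(\hx-\cx)\) reproduces exactly the stated sum \(\beta_F\sum_{i=0}^k\int_0^{h(\tau_{i+1}-\tau_i)}\norm{p_i(t)-s(h\tau_i+t)}\,\mathrm dt\).

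The main obstacle is the rigorous justification of the replacement \(x\mapsto p\). Here I would invoke the third‑order property of the continuous output, i.e.\ \(\norm{x(\tilde t)-p(\tilde t)}=\mathcal O(h^3)\) uniformly on \([0,h]\); together with \(\norm{x-s}\leq\norm{p-s}+\norm{x-p}\) this shows that the difference between the true trajectory contribution \(\beta_F\int_0^h\norm{x-s}\) and the computable one \(\beta_F\int_0^h\norm{p-s}\) is at most \(\beta_F\int_0^h\norm{x-p}\,\mathrm d\tilde t=\mathcal O(h^4)\), of strictly higher order than the two \(\mathcal O(h^3)\) leading terms (note \(\norm{\hx-\cx}=\mathcal O(h)\) and \(\norm{p-s}=\mathcal O(h^2)\)). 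Thus the stated right‑hand side bounds the local error up to this higher‑order remainder, which is the sense in which the estimator is to be understood; pinning down whether the inequality is meant exactly or only asymptotically is the one point that genuinely requires care.
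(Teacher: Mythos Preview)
Your proposal is correct and follows essentially the same route as the paper: the paper also starts from the exact identity \(x(h)-x_h=\int_0^h\bigl[F(x(t))-\lozenge_{\cx}^{\hx}F(\cx+\tfrac th(\hx-\cx))\bigr]\,\mathrm dt\), splits via \(F(s(t))\) into a trajectory term and a model term, bounds the model term by \cite[Prop.~4.2]{NewtonPL} to obtain \(\tfrac1{12}h\gamma_F\norm{\hx-\cx}^2\), and handles the trajectory term by the Lipschitz bound together with the replacement \(x\mapsto p\) justified at the cost of an \(\mathcal O(h^4)\) remainder. Your closing caveat is on target: in the paper, too, the stated inequality is meant up to that higher-order correction.
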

Note that for the evaluation of this formula the integral over the absolute value of a quadratic function has to be calculated. In general, this necessitates the computation of the roots of the polynomial. 

%\begin{remark}
%In the smooth case this error estimator reduces to
%
%which is consistent to classical case, where the error estimator is given as 
%\[h(b_1-b_1^*)k_1 = h\frac{\hat F - \check F}2\]
%\end{remark}

We start the derivation of the error estimator by splitting up the truncation error into two components:  

\begin{align*}
\norm{x(h) - x_h} &= \norm{\int_0^h F(x(t))\mathrm dt - \int_0^h \lozenge_{\cx}^{\hx} F(\cx+\tfrac th(\hx-\cx))\mathrm dt } \\
&\leq \int_0^h \norm{ F(x(t)) -  \lozenge_{\cx}^{\hx} F(\cx+\tfrac th(\hx-\cx))}\mathrm dt \\
&\leq \int_0^h \norm{ F(x(t)) - F(\cx+\tfrac th(\hx-\cx))} \\ &\hspace{.68cm} + \norm{ F(\cx+\tfrac th(\hx-\cx)) - \lozenge_{\cx}^{\hx} F(\cx+\tfrac th(\hx-\cx))}\mathrm dt \,.
\end{align*}
The left term of the last expression can be bounded, using the Lipschitz constant \(\beta_F\) calculated above: 
\[\int_0^h \norm{ F(x(t)) - F(\cx+\tfrac th(\hx-\cx))}\mathrm dt \leq \beta_F \int_0^h \norm{ x(t) - \cx-\tfrac th(\hx-\cx)}\mathrm dt \,.\]
Since the analytical solution trajectory is unknown we approximate it with the piecewise quadratic interpolation polynomial constructed above, whose approximation is of order \(\mathcal O(h^3)\), which does not decrease the overall approximation error of order two. 
\begin{align*}
\beta_F \int_0^h \norm{ x(t) - \cx-\tfrac th(\hx-\cx)}\mathrm dt &= \beta_F \int_0^h \norm{ p(t) - \cx-\tfrac th(\hx-\cx)+\mathcal{O}(h^3)}\mathrm dt \\ &=\beta_F \int_0^h \norm{ p(t) - \cx-\tfrac th(\hx-\cx)}\mathrm dt + \mathcal{O}(h^4)\,.
\end{align*}
This integral can be split up into the sum of the integrals from kink to kink:
\begin{align*}
&\ \beta_F \int_0^h \norm{ p(t) - \cx-\tfrac th(\hx-\cx)}\mathrm dt = \beta_F \sum_{i=0}^k \int_0^{h(\tau_{i+1}-\tau_i)}\norm{p_i(t)-\cx-\tfrac{h\tau_i+t}h(\hx-\cx)}\mathrm dt \\ =\;& \beta_F\sum_{i=0}^k \int_0^{h(\tau_{i+1}-\tau_i)}\norm{\frac{\dot \cx_{\to,i+1}-\dot \cx_{\to,i}}{2h(\tau_{i+1}-\tau_i)}t^2 + \left(\dot \cx_{\to,i} - \frac{\hx-\cx}{h}\right)t + \cx_{\to,i} - \cx - \tau_i(\hx-\cx)}\mathrm dt\,,
\end{align*}
which is the first part of the error estimator. The second part can be bounded, using \cite[Prop. 4.2.]{NewtonPL}. 
\begin{align*}
&\int_0^h\norm{ F(\cx+\tfrac th(\hx-\cx)) - \lozenge_{\cx}^{\hx} F(\cx+\tfrac th(\hx-\cx))}\mathrm dt \\
\leq&\ \tfrac 12\gamma_F\int_0^h\norm{\tfrac th(\hx-\cx)}\norm{(\tfrac th - 1)(\hx-\cx)}\mathrm dt \\
=&\ \tfrac 12\gamma_F\norm{\hx-\cx}^2\int_0^h \left| \tfrac{t^2}{h^2}-\tfrac th\right| \mathrm dt = \tfrac 1{12}h\gamma_F\norm{\hx-\cx}^2\,.
\end{align*}
Hence, the overall error estimator has the form: 

With this formula in hand, a step size control can be implemented, just as in the classical case.

\section{Numerical Examples}
\subsection*{Rolling Stone}
	This example tracks a point moving frictionless on a convex surface
	representing an idealized rolling stone. 
	It can be considered as a harmonic oscillator provided the surface is parabolic.
	We modify this parabola by inserting a planar section in the interval $[-1,1]$.
	This yields the curve
	\begin{equation*}
		V(x) = \begin{cases}
		\ \tfrac 12(1+x)^2, &\hspace{8pt} x\leq-1 \\
		\ 0\,, & -1<x<1\;.\\
		\ \tfrac 12(1-x)^2, &\hspace{8pt}1\leq x
		\end{cases}
	\end{equation*}
The derivative of $V$ defining the acceleration $\ddot x$ of the mass is piecewise linear and given by
	\begin{equation*}
		-V'(x) = \min(\max(-1-x,0),1-x) = -x-|x-1|/2 + |x+1|/2
	\end{equation*}
	which yields the ODE $\ddot x = -V'(x)$.
	The analytic solution of the problem is $(2\pi+4)$-periodic and given by
	\begin{equation*}
		x(t) = \left\lbrace
				\begin{array}{r l}
					1 + \sin(t)	& 0 \leq t \leq \pi\\
					1 - (t - \pi)	& \pi \leq t < \pi + 2\\
					-1-\sin(2-t)	& \pi + 2\leq t < 2\pi +2\\
					t-3-2\pi	& 2\pi+2\leq t < 2\pi+4
				\end{array}\right.
	\end{equation*}
	In Figure \ref{FIG_Visualization} we depicted $V$ and $V'$ as well as the analytic solution
	of the ODE. The linear parts are drawn in gray.
	\begin{figure}[]
	\centering
	\subfigure{\includegraphics[height=170pt]{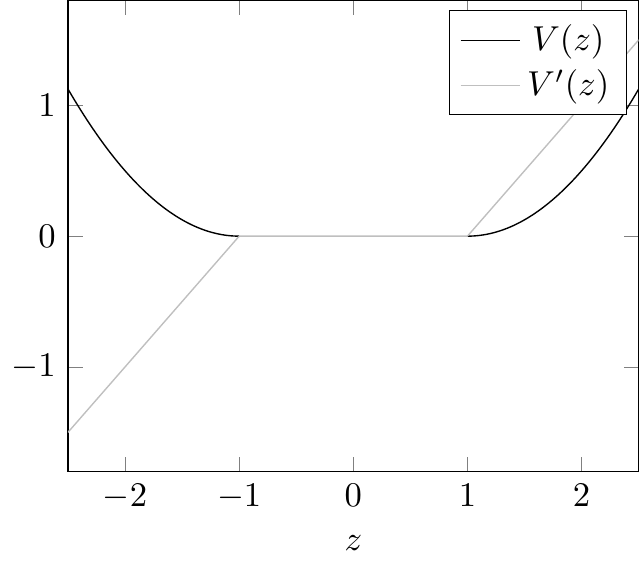}}
	\subfigure{\includegraphics[height=170pt]{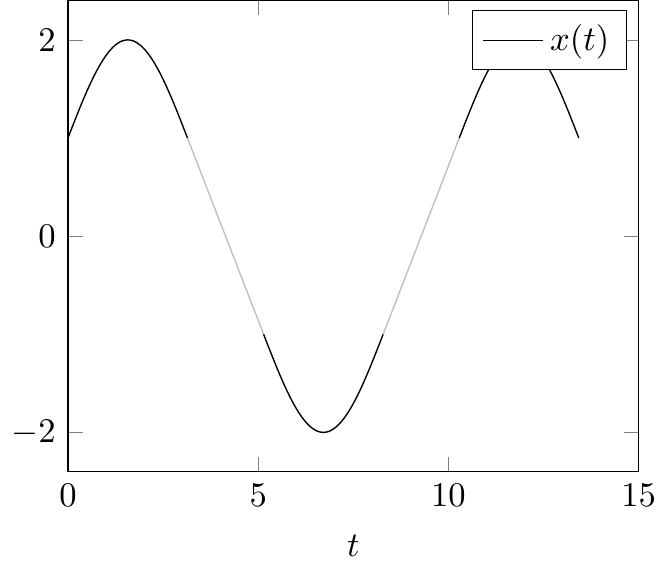}}
	\caption{Visualization of $V$ and the analytic solution of the ODE}\label{FIG_Visualization}
	\end{figure}\\
	From the second order ODE we obtain the first order system
	\begin{equation*}\label{RollingStone:ODESystem}
		\left(\begin{array}{c} \dot x_1\\ \dot x_2\end{array}\right)	= \left(\begin{array}{l} x_2\\-x_1-|x_1-1|/2 + |x_1+1|/2\end{array}\right) = F(x) \,.
	\end{equation*}
	We will consider the initial conditions $x_1(0)=1$ and $x_2(0)=1$. As predicted, we observe a global convergence order of two for both the classical and generalized method. However, it is clearly visible that the generalized method is more stable. This is a consequence of the generalized methods' greater accuracy on the kinks, which is also the reason that extrapolating yields an increased convergence order only for the generalized method. 
	
\begin{figure}
\centering
\includegraphics[width=0.6\textwidth]{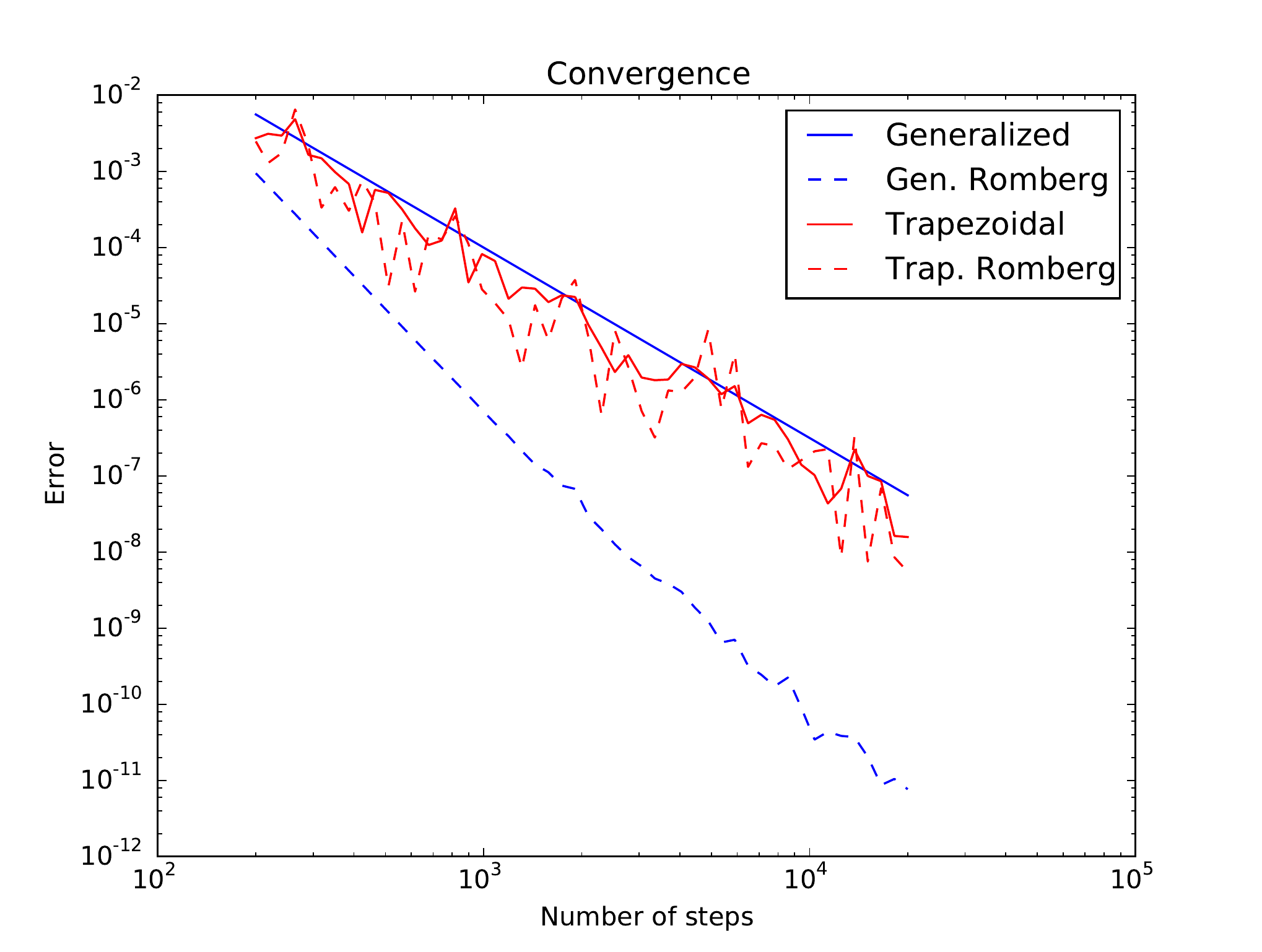}
\caption{Convergence Rolling Stone}
\label{conv_stones}
\end{figure}	

Due to the simplicity of the example adaptive time-stepping does not improve the solution significantly. We thus omit its investigation for the latter. But as a frictionless mechanical system it is energy preserving. As a piecewise linear Hamiltonian system it fulfills the requirements for the generalized method to correctly preserve this energy. Accordingly, the total energy
\[V(x(t))+\tfrac 12\dot x(t)^2\]
must stay constant at its initial value \(V(x_0)+\tfrac 12 y_0^2 = \tfrac 12\). Hence, we consider the total energy variation over all \(N\) time steps using the formula
\[\left[\sum_{i=1}^N\left(V(x_{1,i})+\tfrac 12(x_{2,i})^2 -\tfrac 12\right)^2\right]^{\tfrac 12}\,.	\]
\begin{figure}
\centering
\subfigure[Energy Error]{\includegraphics[height=150pt]{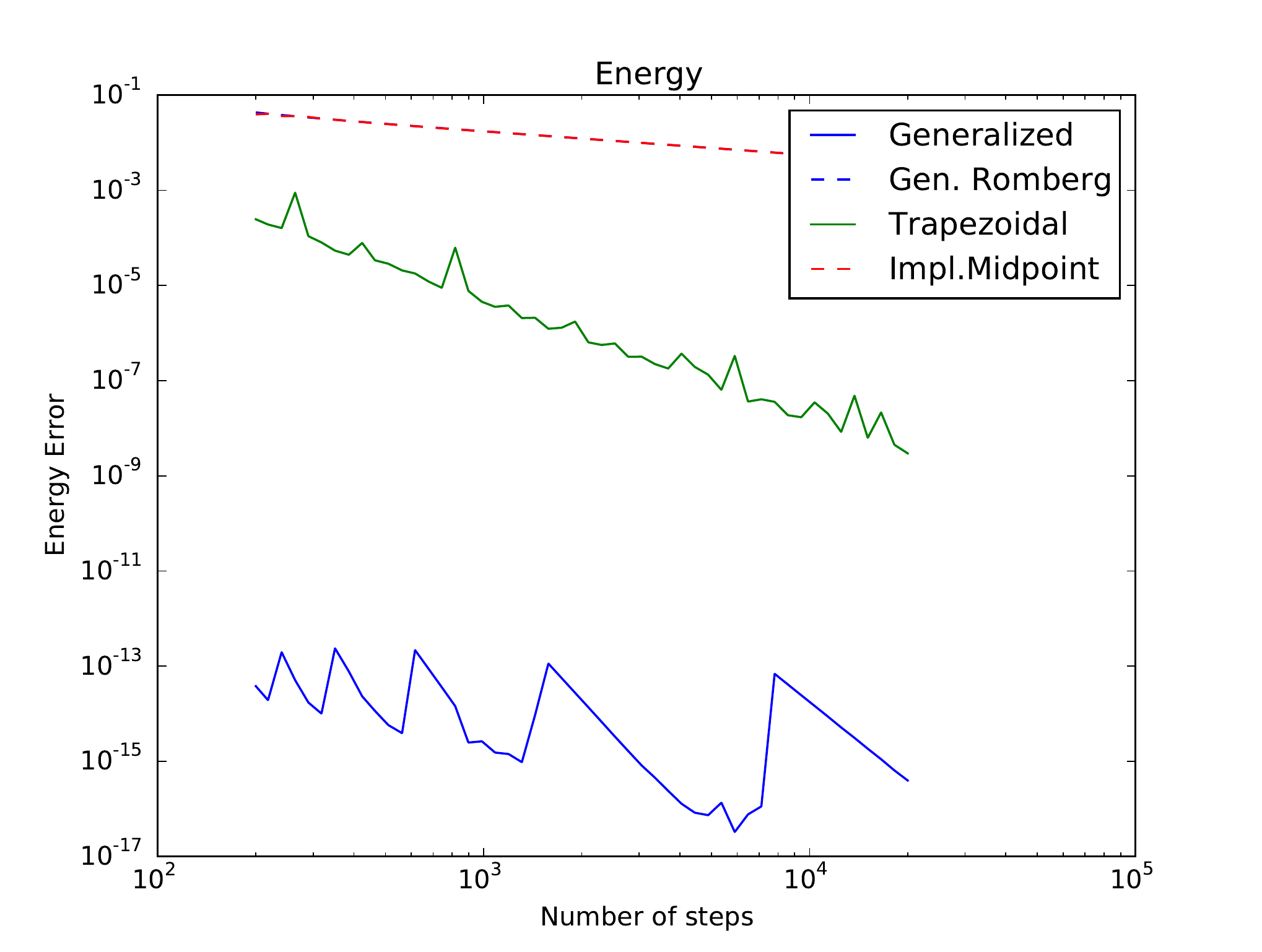}}
\hspace{5pt}
\subfigure[Energy Loss over Time]{\includegraphics[height=150pt]{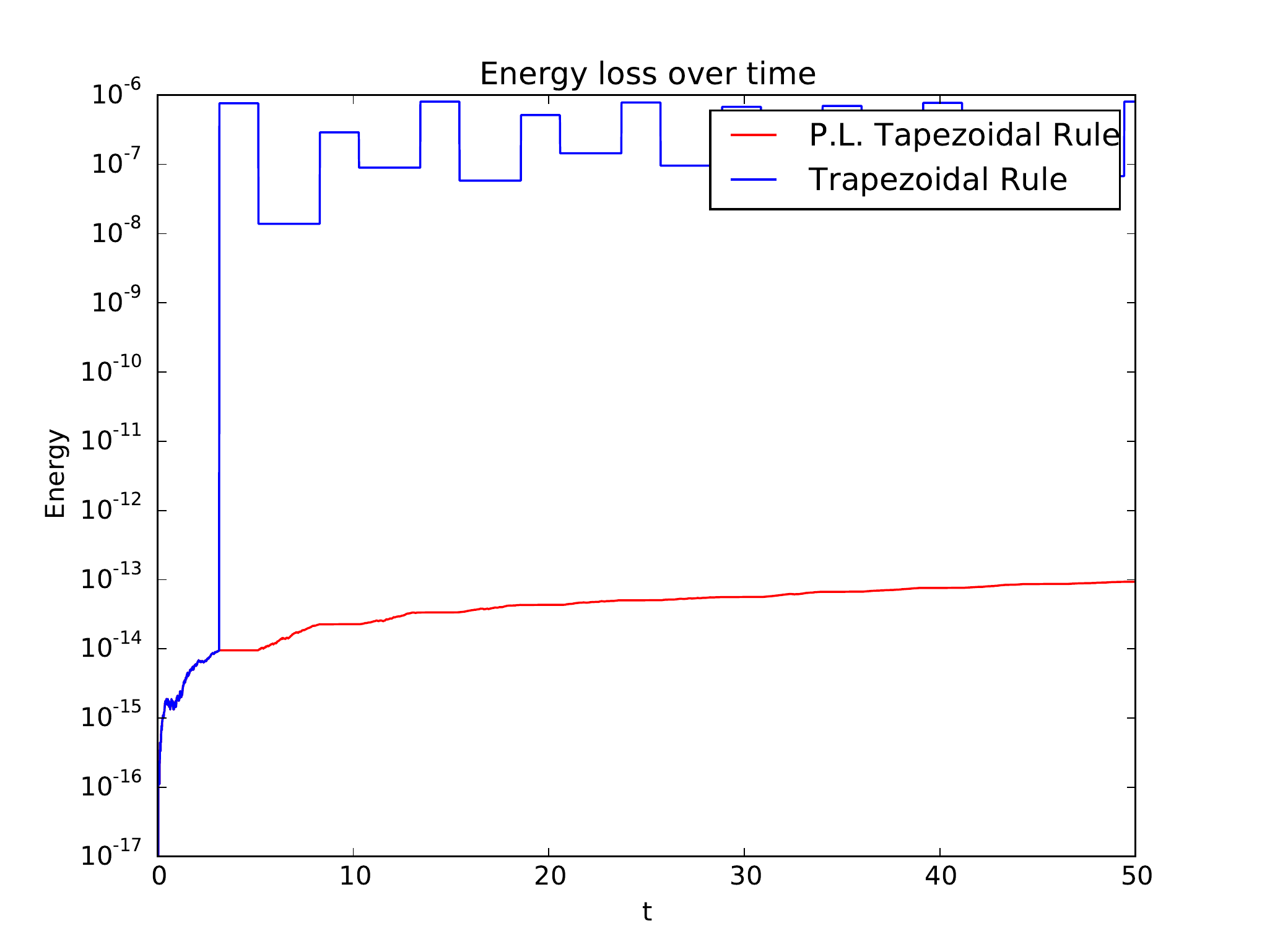}}
\caption{Energy Preservation}
\label{NRG_stones}
\end{figure}
We observe in Figure \ref{NRG_stones} (a) that the generalized method performs in accordance with this expectation, whereas the classical method is not energy preserving on the kinks, as can be seen in Figure \ref{NRG_stones} (b), and consequently does not preserve the systems' energy globally, either. 

\subsection*{Diode LC-Circuit}
The second example is more alike problems arising in actual applications: We take a simple LC-circuit and replace the 
	resistor with a diode providing an element which causes a nondifferentiable impact in the equations
	describing the system. Figure \ref{FIG_CircuitDiagram} depicts the circuit.
% 	\begin{center}
% 	  \begin{circuitikz} \draw 
% 	    (0,0) -- (4,0) to[V, v=$V(t)$] (4,1.5) to[D,l=$g(x_1)$] (4,3) to[C, l=$C$] (0,3) to[L, l=$L$] (0,0)
% 	    {[anchor=north] (2,0) node {$I(t)=x_3(t)$}}
% 	  ;\end{circuitikz}
% 	\end{center}
	\begin{figure}[ht!]
		\centering
		\includegraphics[]{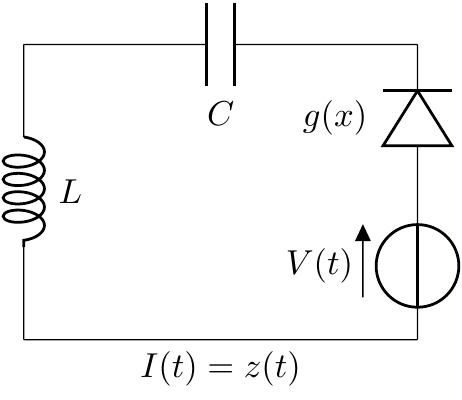}
		\caption{Circuit Diagram}\label{FIG_CircuitDiagram}
	\end{figure}
	It is modeled by the following system of ODE's, where \(x_1\) represents time, \(x_2\) represents the charge
	(at the capacitor) and \(x_3\) represents the electric current in the circuit.
	\begin{align}\label{CircuitSystem}
	  \begin{pmatrix}\dot x_1\\ \dot x_2\\ \dot x_3\end{pmatrix} &= F(\textbf{x}) = 
	  \begin{pmatrix}
	    1\\
	    x_3\\
	    - \bigl(x_2 - C V(x_1) + g(Cx_3)\bigr) \frac{1}{LC}
	  \end{pmatrix}
	\end{align}
	Here \(V(x_1)=sin(\omega x_1)\) is the forcing current and \(g(z)\) models the diode (for small currents) in the piecewise linear form
	\begin{align*}g(z) &=\frac{z+\betrag{z}}{2\alpha}+\frac{z-\betrag{z}}{2\beta}= 
	\begin{cases}
		\frac z\alpha &\text{if }z\geq 0\\
		\frac z\beta &\text{if }z<0 .
	\end{cases}
	\end{align*}
	We choose a set of constants that resembles those occurring in actual electric circuits: 
	\[L=10^{-6},\ C=10^{-13},\ \omega=3\cdot10^9,\ \alpha=2,\ \beta=0.00001\,.\]
	Moreover, we consider the initial conditions $x_1(0)=x_2(0)=x_3(0)=0$.
\begin{figure}[htp]
\centering
\subfigure[Current]{\includegraphics[height=180pt]{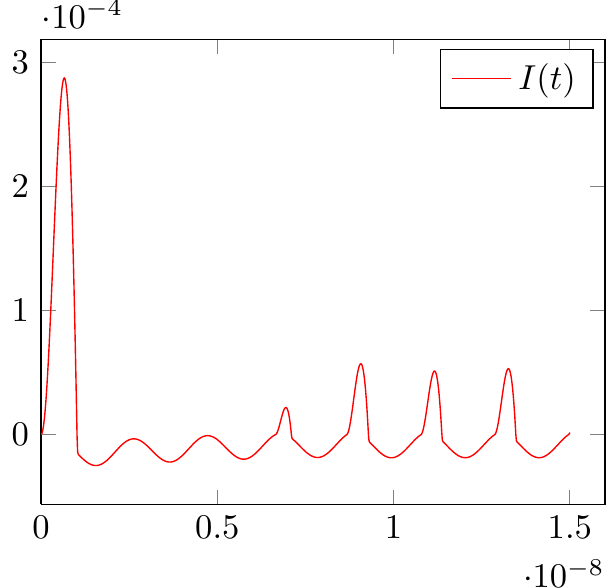}}
\hspace{5pt}
\subfigure[Charge]{\includegraphics[height=180pt]{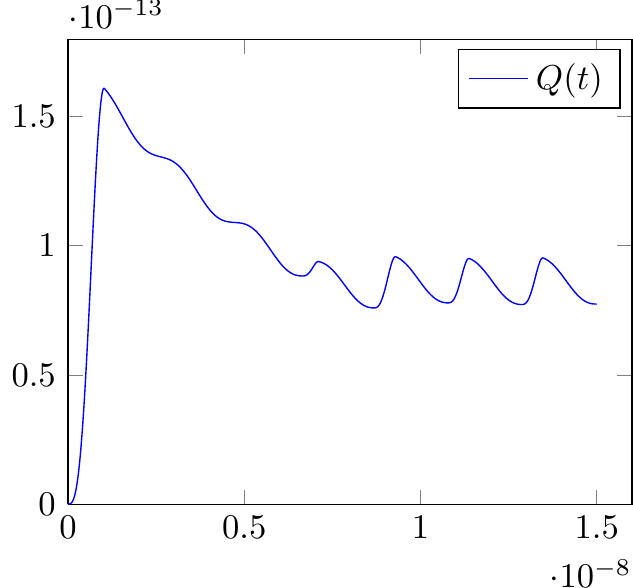}}
\caption{Solution of the ODE System}
\label{FIG_CircuitSolution}
\end{figure}
The result of the  numerical integration solution of \eqref{CircuitSystem} is depicted in Figure \ref{FIG_CircuitSolution}, (a) and (b).
	As one can see, the capacitor is initially charged over one cycle and discharged over a few more,
	before the solution adopts a periodic behavior. The solution trajectory changes its
	behavior every time the current changes its sign. As in the previous example, both methods display second order convergence, while in case of Romberg extrapolation the generalized method gains an order and the classical does not. 
\begin{figure}
\centering
\includegraphics[width=0.7\textwidth]{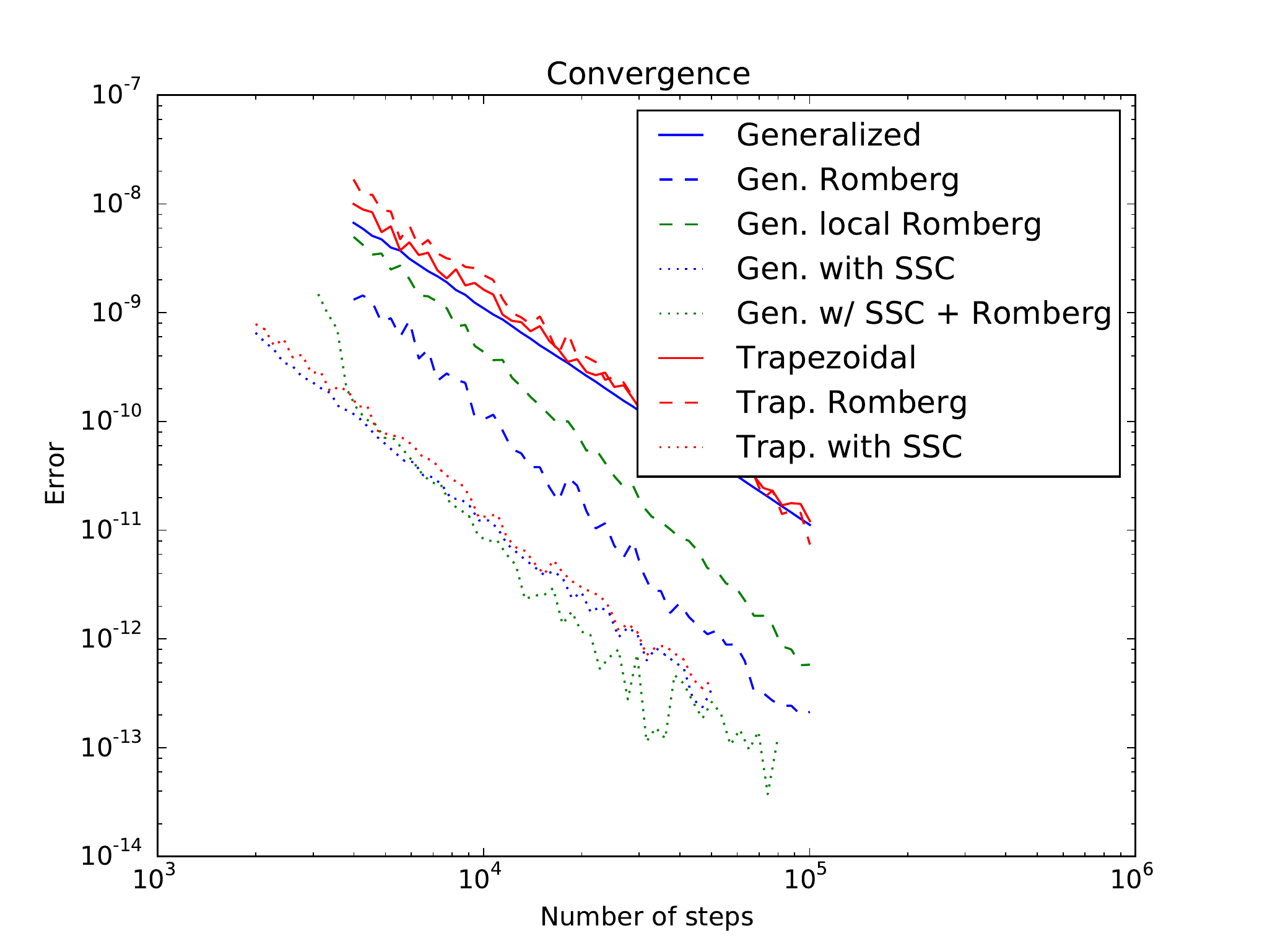}
\caption{Convergence Diode}
\label{conv_diode}
\end{figure}
In Figure \ref{conv_diode} we observe that adaptive time-stepping results in similar improvements to the convergence constant for the generalized method as for the classical method. Moreover it can be seen that adaptive time-stepping and extrapolation can be combined, which allows to get both a small constant and a third order convergence rate. 

\section{Final Remarks}
In the numerical results one can observe that the energy preservation is lost after extrapolation. This is due to the loss of time reversibility. It has to be investigated further, if this property can be restored. Furthermore, the current Lipschitz constants from \cite[Prop. 4.2.]{NewtonPL} are in some cases huge overestimations. It is desirable to sharpen these bounds. Also, one should investigate the possibility of automated scaling of the error norm using information from the structure of the piecewise linearization to reflect the dimensions of the components of the error. A long term goal is the extension of the method to piecewise smooth functions that are not continuous. A short term goal is the implementation of the method analyzed in this article. Each inner iteration of the method requires the solution of a piecewise linear system, e.g. by the solvers proposed in \cite{SGE,streubel2014abs,son}. We intend to deliver an efficient implementation for an integrated framework of piecewise linearization 
and ODE as well as equation solving in subsequent publications. 

\section*{Acknowledgements}

The work for the article has been partially conducted within the Research Campus
MODAL funded by the German Federal Ministry of Education and Research
(BMBF) (fund number \(05\text{M}14\text{ZAM}\)).

\bibliography{autodiff}
\bibliographystyle{alpha}

\end{document}